\newtheorem{theorem}{Theorem}[section]
\newtheorem{proposition}[theorem]{Proposition}
\newtheorem{conjecture}[theorem]{Conjecture}
\newtheorem{corollary}[theorem]{Corollary}
\theoremstyle{definition}
\newtheorem{remark}[theorem]{Remark}
\newenvironment{italic}{\itshape}{}
\newcommand{\newword}[1]{\textbf{#1}}
\newcommand{\Z}{\mathbb{Z}}
\thanks{The third author gratefully acknowledges the
Funda\c{c}\~ao para a Ci\^encia e a Tecnologia (FCT - PEst- OE/MAT/UI0143/2014) for the support received during the development of this work.}
\begin{document}

\title{Embedding Right-Angled Artin Groups into Brin-Thompson Groups}

\author{James M.~Belk}
\author{Collin Bleak}
\author{Francesco Matucci}

\begin{abstract}
We prove that every finitely-generated right-angled Artin group can be embedded into some Brin-Thompson group~$nV$.  It follows that many other groups can be embedded into some~$nV$ (e.g., any finite extension of any of Haglund and Wise's special groups), and that various decision problems involving subgroups of $nV$ are unsolvable.
\end{abstract}

\maketitle

\section{Introduction}
The \newword{Brin-Thompson groups} $nV$ are an infinite family of groups that act by homeomorphisms on Cantor spaces.  They were first defined by Matt Brin in~\cite{Brin}, and can be viewed as higher-dimensional generalizations of the group~$V$ ($=1V$) defined by Richard J.~Thompson (see \cite{CFP} for an introduction to Thompson's groups).  The groups $nV$ are finitely presented~\cite{HeMa} and simple~\cite{Brin3}, and for $1\leq j<k$ it is known that $jV$ embeds into $kV$ \cite{Brin}, but also that $jV$ and $kV$ are not isomorphic~\cite{BlLa}.

Recall that a \newword{simple graph} is a finite graph with no loops (a loop is an edge from a vertex to itself) and no mulitple edges (any two distinct vertices admit at most one edge between them).  Given a simple graph $\Gamma$, the associated \newword{right-angled Artin group} $A_\Gamma$ has one generator $g_v$ for each vertex $v$ of~$\Gamma$, with one relation of the form $g_vg_w=g_wg_v$ for each pair of vertices $v,w$ that are connected by an edge.  This class of groups has been studied extensively (see~\cite{Char} for an introduction).

Our main result is the following theorem:

\begin{theorem}\label{thm:MainTheorem}For any simple graph\/~$\Gamma$, there exists an $n\geq 1$ so that the right-angled Artin group~$A_\Gamma$ embeds isomorphically into~$nV$.
\end{theorem}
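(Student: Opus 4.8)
The plan is to realize $A_\Gamma$ directly as a group of homeomorphisms of a Cantor cube $\mathfrak{C}^{\,n}=(\{0,1\}^{\mathbb{N}})^{n}$ acting by rectangular prefix-replacements --- that is, as a subgroup of $nV$ --- for a value of $n$ depending on $\Gamma$. The organizing notion will be that of a \emph{demonstrative} subgroup: a subgroup $H\le nV$ admitting a nonempty clopen set $C$ whose $H$-translates $hC$ ($h\in H$) are pairwise disjoint. On the clopen set $\bigsqcup_{h\in H}hC$ such an $H$ looks exactly like its own regular representation, so demonstrative subgroups are both easy to certify and easy to glue together. As a first building block I would produce an explicit element $\phi\in 2V$ and an explicit clopen $C\subseteq\mathfrak{C}^{\,2}$ with the $\phi^{k}C$ pairwise disjoint, so that $\mathbb{Z}$ embeds demonstratively into $2V$; the need for the extra coordinate here (one must have somewhere to push mass into to keep the map a finite prefix-replacement) is exactly why the theorem concerns $nV$ rather than $V$. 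Then I would record the closure property that drives the commuting relations: if $H\le mV$ is demonstrative via $C_{H}$ and $K\le nV$ is demonstrative via $C_{K}$, then letting $H$ act on the first $m$ coordinates and $K$ on the last $n$ exhibits $H\times K$ as a demonstrative subgroup of $(m+n)V$ via $C_{H}\times C_{K}$. Since a graph join corresponds to a direct product of right-angled Artin groups, this already settles the case of complete $\Gamma$ ($\mathbb{Z}^{m}\le mV$) and, more importantly, accounts for all edges of $\Gamma$.

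For a general graph $\Gamma$ with vertices $v_{1},\dots,v_{m}$ I would choose $n$ large enough --- a function of the number of non-edges of $\Gamma$, say, or of its clique-cover number --- and construct clopen support sets $A_{1},\dots,A_{m}\subseteq\mathfrak{C}^{\,n}$, each a finite union of dyadic bricks, arranged so that $A_{i}\cap A_{j}\neq\emptyset$ precisely when $v_{i}$ and $v_{j}$ are non-adjacent. On each brick contained in exactly one $A_{i}$ I install a shift-type homeomorphism as above (to force infinite order and to anchor a wandering clopen set), and on each brick lying in an intersection $A_{i}\cap A_{j}$ I install, in the coordinates local to that brick, a ping-pong pair of homeomorphisms --- one assigned to $g_{v_{i}}$, one to $g_{v_{j}}$ --- generating a free group of rank $2$, which is what the subgroup $\langle g_{v_{i}},g_{v_{j}}\rangle$ of $A_{\Gamma}$ must map to. Defining $\phi_{i}$ to be the product over all bricks of $A_{i}$ of the corresponding local pieces produces candidate images of the generators, and since the supports of $\phi_{i}$ and $\phi_{j}$ are disjoint exactly when $v_{i}\sim v_{j}$, all the required commuting relations hold automatically; what remains is faithfulness.

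The heart of the matter is showing that the resulting homomorphism $A_{\Gamma}\to nV$ is injective, i.e., that a word which is reduced in the right-angled Artin sense is sent to a nontrivial homeomorphism. For this I would fix a normal form for $A_{\Gamma}$ (Servatius's centralizer theory, or the ``piling'' description) and run a ping-pong argument adapted to right-angled Artin groups rather than to free groups: following the orbit of a suitably chosen point through the Cantor cube, one shows that the sequence of bricks visited records the normal form, so the point actually moves. The construction is rigged so that this succeeds --- syllables built from non-commuting letters are forced onto a common coordinate region, where the installed free ping-pong configuration prevents cancellation, whereas syllables in commuting letters live on disjoint regions and may be shuffled to bring the word into normal form. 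The main obstacle, and the bulk of the work, is the compatibility bookkeeping: arranging all the local ping-pong tables at the various overlaps to cohere simultaneously, verifying that the interplay between the ``direct-product directions'' and the ``free-product directions'' never produces an unexpected cancellation, and keeping a fresh wandering clopen set available so that the image is again demonstrative (which makes the construction iterable). A count of how many coordinates were consumed then yields the value of $n$.
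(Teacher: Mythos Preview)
Your plan has a real gap at the faithfulness step, and it comes from the ``local pieces on invariant bricks'' architecture rather than from the bookkeeping you flag. As you describe it, each $\phi_i$ is a product of homeomorphisms supported on bricks, and on a brick in $A_i\cap A_j$ (for a non-edge $\{v_i,v_j\}$) you install a rank-$2$ ping-pong pair. But knowing that every two-generator subgroup $\langle\phi_i,\phi_j\rangle$ is the correct $F_2$ or $\mathbb{Z}^2$ does not force the whole homomorphism to be injective. Take $\Gamma$ to be three isolated vertices, so $A_\Gamma=F_3$, and suppose (as your setup implies) that each brick is invariant under the $\phi$'s supported on it and that there is no triple-overlap brick. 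The element $w=[[g_1,g_2],g_3]$ is nontrivial in $F_3$, yet on every brick at least one of $\phi_1,\phi_2,\phi_3$ restricts to the identity, and substituting the identity for any one letter kills $w$; hence $[[\phi_1,\phi_2],\phi_3]=1$ globally and your map has a kernel. The orbit-tracking you propose cannot save this: a point never leaves its brick, so the ``sequence of bricks visited'' is constant. Repairing this would require bricks for \emph{all} independent sets of $\Gamma$, each carrying a coherent free ping-pong of the appropriate rank --- at which point you are rebuilding a global ping-pong configuration for $A_\Gamma$ by hand.

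The paper short-circuits this entirely. It quotes a ready-made ping-pong lemma for right-angled Artin groups (sets $S_i^{+},S_i^{-}$ together with conditions on how each generator moves every $S_j$, not merely pairwise freeness), and then exhibits a single Cantor cube with one binary coordinate per vertex and one ternary coordinate $\{i,j,\emptyset\}^\omega$ per non-edge $\{i,j\}$. Each $h_i$ is defined globally, not as a product of brick-local pieces: in one stroke it sends all of $X\setminus S_i$ into $S_i^{+}$ (and $h_i^{-1}$ sends $X\setminus S_i$ into $S_i^{-}$), where $S_i$ is the set of points whose $\{i,j\}$-coordinate begins with the symbol $i$ for every non-edge $\{i,j\}$. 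Commutation for adjacent generators holds because their relevant coordinate sets are disjoint; the RAAG ping-pong hypotheses are then immediate, and faithfulness and demonstrativeness follow with no normal-form analysis at all. The ingredient missing from your sketch is exactly this global ping-pong criterion together with a construction whose generators move points \emph{between} regions rather than preserving a fixed brick decomposition.
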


Specifically, we prove that $A_\Gamma$ embeds into $nV$ for $n = |V| + |E^c|$, where $V$ is the set of vertices of~$\Gamma$ and $E^c$ is the set of complementary edges, i.e.~the set of all pairs of vertices that are \textit{not} connected by an edge.

Recall the definition of Bleak and Salazar-D\'iaz in ~\cite{BlSa} that a group $G$ is \newword{demonstrable for a group $K$} of homeomorphisms of a space $X$ if there is an embedding $\hat{G}$ of $G$ into $K$ so that there is an open set $U$ in $X$ so that for all $\hat{g}\neq 1_K\in \hat{G}$ we have $U\cdot \hat{g}\cap U=\emptyset$ (we call the image group $\hat{G}$ a demonstrative subgroup of $K$ with demonstration set $U$).  Bleak and Salazar-D\'iaz also say that such a group $K$ of homeomorphisms of a space $X$ \newword{acts with local realisation} if given any open set $U\subset X$, there is a subgroup $K_U$ of $K$ supported only on $U$ so that $K_U\cong K$.  We now state a theorem of \cite{BlSa}.

{\flushleft {\bf Theorem:} [Bleak--Salazar-D\'iaz, Proposition 3.6]\begin{italic} Let $K$ be a group of homeomorphisms of a space $X$ so that $K$ acts with local realisation, and let $G$ be demonstrable for $K$.  Then $K\wr G$ embeds in $K$. 
\end{italic}}

The embedding $A_\Gamma \to nV$ that we construct is demonstrative for $nV$.  Moreover, $nV$~acts on the Cantor cube with local realization, so we obtain the following.

\begin{theorem}For any simple graph\/~$\Gamma$, there exists an $n\geq 1$ so that the standard restricted wreath product $nV\wr A_\Gamma \cong \bigl(\bigoplus_{A_\Gamma} \!nV\bigr)\rtimes A_\Gamma$ embeds isomorphically into~$nV$.
\end{theorem}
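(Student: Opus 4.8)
The plan is to derive this statement as a quick consequence of three ingredients: Theorem~\ref{thm:MainTheorem}, the quoted Proposition 3.6 of Bleak--Salazar-D\'iaz, and two auxiliary facts about the embedding we construct and about the action of $nV$. Taking $n=|V|+|E^c|$, Theorem~\ref{thm:MainTheorem} gives an embedding $\iota\colon A_\Gamma\hookrightarrow nV$; the first thing I would check is that this particular $\iota$ is \newword{demonstrative}, i.e.\ that there is a clopen subset $U$ of the Cantor cube on which $nV$ acts with $U\cdot\iota(g)\cap U=\emptyset$ for every $g\neq 1_{A_\Gamma}$. This is exactly the hypothesis needed to say that $A_\Gamma$ is demonstrable for $nV$ in the sense of~\cite{BlSa}. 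I expect this to fall out of the combinatorics of the construction used for Theorem~\ref{thm:MainTheorem} --- the generators $g_v$ being realized by prefix-replacement homeomorphisms whose supports are arranged along the coordinate directions so that a suitably chosen brick gets displaced entirely off itself by any nontrivial element of the image --- and I anticipate that this verification (rather than the final assembly) is where the real work lies, since it is the one step that genuinely uses the geometry of $\Gamma$ and the specific features of the embedding.

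Next I would verify that $nV$ acts on the Cantor cube with \newword{local realisation}. Given any nonempty open set $U$ in the cube, choose a basic brick $B$ contained in $U$ (bricks form a basis for the topology), and conjugate by the canonical rescaling homeomorphism identifying $B$ with the whole cube; this exhibits a subgroup of $nV$ that is isomorphic to $nV$ and whose elements all fix the complement of $B$ pointwise, hence is supported inside $B\subseteq U$. This is precisely the local realisation property, and the verification is routine.

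With these two points in hand, applying the Bleak--Salazar-D\'iaz theorem with $K=nV$ and $G=A_\Gamma$ produces an embedding $nV\wr A_\Gamma\hookrightarrow nV$, where $nV\wr A_\Gamma$ denotes the restricted wreath product $\bigl(\bigoplus_{A_\Gamma}nV\bigr)\rtimes A_\Gamma$ by definition; note that the same value $n=|V|+|E^c|$ serves throughout, so no increase in dimension is needed. In summary, the only nontrivial step is establishing that the embedding of Theorem~\ref{thm:MainTheorem} is (or can be arranged to be) demonstrative for $nV$; local realisation of the $nV$-action is routine, and everything else is a direct invocation of Proposition 3.6 of~\cite{BlSa}.
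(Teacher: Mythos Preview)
Your proposal is correct and matches the paper's approach essentially line for line: the paper derives this theorem as an immediate corollary of Theorem~\ref{thm:MainTheorem} together with Proposition~3.6 of~\cite{BlSa}, after noting that the constructed embedding of $A_\Gamma$ into $nV$ is demonstrative and that $nV$ acts with local realisation. As you anticipate, the only substantive verification is demonstrativity, which the paper carries out at the end of Section~4 by exhibiting the open set $U=\{x\in X : x_{ij}\text{ starts with }\emptyset\text{ for all }\{i,j\}\in P\}$ and observing that $h_i(U)\subseteq S_i^+$ and $h_i^{-1}(U)\subseteq S_i^-$, so that the ping-pong argument forces all $A_\Gamma$-translates of $U$ to be pairwise disjoint.
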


The Krasner-Kaloujnine theorem~\cite{KrKa} states that any extension of a group $G$ by a group $H$ is contained in the (unrestricted) wreath product~$H\wr G$.  Since, for all $n\geq 1$, all finite groups embed in $nV$ in a demonstrative way, Theorem \ref{thm:MainTheorem} and Krasner-Kaloujnine theorem show the following.

\begin{corollary}  For any finite graph\/~$\Gamma$ there is a natural number $n$ so that every finite extension of $A_\Gamma$ embeds into~$nV$.
\end{corollary}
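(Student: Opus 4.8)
The plan is to run a short argument that feeds the Krasner-Kaloujnine universal embedding theorem into the wreath-product machinery already assembled. Fix a finite graph~$\Gamma$ and let $n=|V|+|E^c|$ be the integer produced by Theorem~\ref{thm:MainTheorem}, so that $A_\Gamma$ embeds in~$nV$. I will show that this same~$n$ works uniformly for every finite extension of~$A_\Gamma$, which is exactly the content of the corollary.

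First I would pin down the meaning of ``finite extension of $A_\Gamma$'': a group $E$ with a normal subgroup isomorphic to $A_\Gamma$ whose quotient $F:=E/A_\Gamma$ is finite. Applying the Krasner-Kaloujnine theorem to the short exact sequence $1\to A_\Gamma\to E\to F\to 1$ gives an embedding $E\hookrightarrow A_\Gamma\wr F$; since $F$ is finite, the restricted and unrestricted wreath products here coincide. Next, the embedding $A_\Gamma\hookrightarrow nV$ from Theorem~\ref{thm:MainTheorem}, applied in each coordinate, yields an $F$-equivariant injection $\bigoplus_F A_\Gamma\hookrightarrow\bigoplus_F nV$, which extends over the acting factor to $A_\Gamma\wr F\hookrightarrow nV\wr F$. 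Finally, because every finite group embeds demonstratively in $nV$ and $nV$ acts on the Cantor cube with local realisation, the quoted theorem of Bleak--Salazar-D\'iaz gives $nV\wr F\hookrightarrow nV$. Composing the three embeddings yields $E\hookrightarrow nV$ with $n$ independent of~$E$.

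If one instead reads ``finite extension'' as ``$A_\Gamma$ has finite index in $E$'' without assuming normality, I would first pass to the normal core $N=\bigcap_{e\in E}eA_\Gamma e^{-1}$: this is a finite-index subgroup of $A_\Gamma$, hence still embeds in $nV$ by restricting the embedding of Theorem~\ref{thm:MainTheorem}, and it is normal in $E$ with finite quotient, so the argument above applies with $N$ in place of $A_\Gamma$ --- crucially, that argument never uses that the embedded subgroup is itself a right-angled Artin group. I do not expect a genuine obstacle here: the real work was done in proving Theorem~\ref{thm:MainTheorem} and in the cited facts that finite groups embed demonstratively in $nV$ and that $nV$ has local realisation. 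The two points requiring a little care are that the Krasner-Kaloujnine embedding places the \emph{kernel}~$A_\Gamma$ in the base of the wreath product (so $F$, not $A_\Gamma$, is the acting group), and that finiteness of $F$ is precisely what removes any restricted-versus-unrestricted discrepancy, so that only the finite-acting-group case of Bleak--Salazar-D\'iaz is needed rather than the full strength of the preceding theorem on $nV\wr A_\Gamma$.
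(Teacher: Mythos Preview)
Your proposal is correct and follows exactly the route the paper takes: apply Krasner--Kaloujnine to embed the finite extension into $A_\Gamma\wr F$, push this into $nV\wr F$ via Theorem~\ref{thm:MainTheorem}, and then collapse $nV\wr F$ into $nV$ using that finite groups embed demonstratively in $nV$ together with local realisation and the Bleak--Salazar-D\'iaz theorem. Your additional remarks (restricted versus unrestricted wreath products, and the normal-core reduction in the non-normal reading of ``finite extension'') are welcome clarifications that the paper leaves implicit.
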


Recall that we say a group $G$ \newword{virtually embeds} in a group $H$ if $G$ admits a finite index subgroup $F$ so that $F$ embeds in $H$.  Note also that any group that virtually embeds into~$nV$ embeds isomorphically into~$nV$.  This follows from the fact that finite groups embed demonstratively into~$V$ (and hence into $nV$ \cite[Lemma~3.3]{BlSa} by taking products across the cantor spaces in the extraneous dimensions), combined with the argument above.  In particular, any group that virtually embeds into a right-angled Artin group embeds isomorphically into some~$nV$.

\begin{corollary}If $G$ is a surface group or a graph braid group, then there exists an $n\geq 1$ so that $G$ embeds into~$nV$.
\end{corollary}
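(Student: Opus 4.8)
The plan is to reduce everything to the observation recorded just above the statement, namely that \emph{any group which virtually embeds into a right-angled Artin group embeds isomorphically into some~$nV$}. Thus it suffices to show that surface groups and graph braid groups each virtually embed into a right-angled Artin group; in fact both embed outright, except for a handful of low-complexity surfaces which must be treated separately.

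For graph braid groups the relevant input is the work of Crisp and Wiest. A graph braid group is the fundamental group of a configuration space of finitely many (ordered or unordered) points in a finite graph; after Abrams' subdivision this space deformation retracts onto a non-positively curved cube complex which is special in the sense of Haglund and Wise, so its fundamental group embeds into a right-angled Artin group --- this is exactly the class of ``special'' groups referenced in the abstract. Combined with Theorem~\ref{thm:MainTheorem}, every graph braid group embeds into some~$nV$, with no passage to a finite-index subgroup required.

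For surface groups, Crisp and Wiest show that the fundamental group of every compact surface --- orientable or not, with or without boundary --- embeds into a right-angled Artin group, with the exceptions of $S^2$, $\mathbb{RP}^2$, and the Klein bottle. These are handled directly: $\pi_1(S^2)=1$ and $\pi_1(\mathbb{RP}^2)\cong\mathbb{Z}/2$ are finite, hence embed (demonstratively) into $V=1V$; and the Klein bottle group contains $\mathbb{Z}^2$ as an index-$2$ subgroup, while $\mathbb{Z}^2\cong A_\Gamma$ for $\Gamma$ a single edge, so the Klein bottle group virtually embeds into a right-angled Artin group. In every case the reduction of the first paragraph applies and yields an embedding into some~$nV$.

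I do not expect a serious obstacle: the content is in citing Theorem~\ref{thm:MainTheorem} against the literature (Crisp--Wiest for the embeddings into right-angled Artin groups, Haglund--Wise for the special-cube-complex machinery underlying the graph braid case). The only genuine subtlety is the short list of small surfaces excluded from the Crisp--Wiest embedding theorem; for each of these the fundamental group is either finite or virtually $\mathbb{Z}^2$, so it still virtually embeds into a right-angled Artin group and the conclusion follows.
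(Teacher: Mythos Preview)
Your argument is correct. For graph braid groups it agrees with the paper's: both simply cite Crisp--Wiest for the embedding into a right-angled Artin group (your Haglund--Wise detour is sound but unnecessary, as Crisp--Wiest construct the embedding directly). For surface groups you take a genuinely different route. The paper splits into hyperbolic versus non-hyperbolic cases: hyperbolic surface groups are handled via the Droms--Servatius--Servatius embedding of the genus-five surface group into the pentagon right-angled Artin group, which yields the explicit target $10V$; non-hyperbolic surface groups are embedded directly into $V$ by a result of Bleak--Matucci--Neunh\"offer. You instead invoke the Crisp--Wiest surface embedding theorem uniformly and dispose of the excluded low-complexity surfaces by hand via the virtual-embedding observation preceding the corollary. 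Your route is tidier and cites a single source, but it forfeits the explicit dimension bound the paper records.

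One minor correction to your exception list: the surfaces not covered by the Crisp--Wiest embedding theorem are the closed non-orientable ones of genus at most three ($S^2$ is not an exception --- its fundamental group is trivial). So besides $\mathbb{RP}^2$ and the Klein bottle you must also treat $N_3$, the connected sum of three projective planes. This does no damage to your argument: the orientation double cover of $N_3$ is the closed orientable genus-two surface, so $\pi_1(N_3)$ has a hyperbolic surface group as an index-two subgroup, hence virtually embeds in a right-angled Artin group, and your reduction applies unchanged.
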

\begin{proof}Droms, Servatius, and Servatious prove that the fundamental group of a surface of genus five embeds into~$A_\Gamma$ for~$\Gamma$ a 5-cycle~\cite{SDS}, and it follows that all hyperbolic surface groups embed into~$10V$. Neunh\"offer, the second and third author observe that all non-hyperbolic surface groups embed into~$V$~\cite{BlMaNe}.

As for graph braid groups, Crisp and Wiest prove that all graph braid groups embed in some right-angled Artin group~\cite{CrWi}.
\end{proof}

Note that the braid groups $B_n$ do not belong to the family of graph braid groups.  We do not know whether braid groups can be embedded into~$nV$.

Haglund and Wise have shown that the fundamental group of any ``special'' cube complex embeds in a right-angled Artin groups~\cite{HaWi1}.  These are known as \newword{special groups}, and any group that has a special subgroup of finite index is \newword{virtually special}.

\begin{corollary}For any virtually special group~$G$, there exists an $n\geq 1$ so that $G$ embeds isomorphically into~$nV$.  This includes:
\begin{enumerate}
\item All finitely generated Coxeter groups~\cite{HaWi2}.\smallskip
\item Many word hyperbolic groups, including all one-relator groups with torsion~\cite{Wise1}.\smallskip
\item All limit groups~\cite{Wise1}.\smallskip
\item Many\/~$3$-manifold groups, including the fundamental groups of all compact\/ \mbox{$3$-manifolds} that admit a Riemannian metric of nonpositive curvature~\cite{PyWi}, as well as all finite-volume hyperbolic $3$-manifolds~\cite{Agol}.
\end{enumerate}
\end{corollary}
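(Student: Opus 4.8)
The plan is to derive this corollary formally from Theorem~\ref{thm:MainTheorem}, the Haglund--Wise embedding theorem, and the wreath-product machinery recalled just above, without any new construction. First I would unwind the hypothesis: if $G$ is virtually special, then by definition $G$ contains a finite-index subgroup $F$ that is special, and by Haglund--Wise~\cite{HaWi1} every special group embeds into a right-angled Artin group $A_\Gamma$; since the relevant notion here (and in all the listed examples) is virtually \emph{compact} special, we may take $\Gamma$ to be a finite graph. Applying Theorem~\ref{thm:MainTheorem} to $A_\Gamma$ gives an $n\geq 1$ with $A_\Gamma\hookrightarrow nV$, and hence $F\hookrightarrow nV$. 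Thus $G$ \emph{virtually} embeds into $nV$, and it remains only to promote this to an honest embedding of $G$ --- that is, to supply the argument (sketched in the paragraph preceding the corollary) that any group virtually embedding into $nV$ embeds isomorphically into $nV$.

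For that promotion step I would argue as follows. Replace $F$ by its normal core $N=\bigcap_{g\in G} gFg^{-1}$: then $N\trianglelefteq G$, the index $[G:N]$ is finite, and $N\leq F$, so $N\hookrightarrow nV$, while $Q:=G/N$ is a finite group. By the Krasner--Kaloujnine theorem~\cite{KrKa} applied to $1\to N\to G\to Q\to 1$, the group $G$ embeds into the wreath product $N\wr Q$, which --- since $Q$ is finite --- coincides with $\bigl(\bigoplus_{Q} N\bigr)\rtimes Q$. The embedding $N\hookrightarrow nV$ then induces, coordinatewise and compatibly with the $Q$-action on the base, an embedding $N\wr Q\hookrightarrow nV\wr Q$. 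Finally, $Q$ is finite, hence demonstrable for $nV$ (finite groups embed demonstratively into $V$, and therefore into $nV$ by \cite[Lemma~3.3]{BlSa}), and $nV$ acts on the Cantor cube with local realization; so the Bleak--Salazar-D\'iaz theorem quoted above yields $nV\wr Q\hookrightarrow nV$. Composing the chain $G\hookrightarrow N\wr Q\hookrightarrow nV\wr Q\hookrightarrow nV$ proves the general statement.

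For the itemized families no further work is needed beyond citing the relevant virtual-specialness results: finitely generated Coxeter groups by Haglund--Wise~\cite{HaWi2}; one-relator groups with torsion and all limit groups by Wise~\cite{Wise1}; $\pi_1$ of compact nonpositively curved $3$-manifolds by Przytycki--Wise~\cite{PyWi}; and $\pi_1$ of finite-volume hyperbolic $3$-manifolds by Agol~\cite{Agol}. Each such group is therefore covered by the general statement. I do not expect a genuine obstacle here: the content is all in Theorem~\ref{thm:MainTheorem} and the quoted wreath-product theorem, and the only points requiring care are bookkeeping --- one must pass to the normal core $N$ rather than use $F$ directly so that Krasner--Kaloujnine applies, and one must observe that the finite quotient $Q$ sits demonstratively inside $nV$ so that the Bleak--Salazar-D\'iaz theorem is applicable; with these in hand the argument is routine.
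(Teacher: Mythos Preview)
Your proposal is correct and follows essentially the same route as the paper: the corollary has no separate proof there, but is deduced from Theorem~\ref{thm:MainTheorem}, the Haglund--Wise embedding, and the observation (stated in the paragraph just before the corollary) that any group virtually embedding into $nV$ embeds into $nV$, via Krasner--Kaloujnine together with the Bleak--Salazar-D\'iaz wreath-product theorem. You have in fact spelled out the promotion step more carefully than the paper does---in particular the passage to the normal core $N$ so that $1\to N\to G\to Q\to 1$ is genuinely an extension---which is exactly the bookkeeping the paper's phrase ``combined with the argument above'' is gesturing at.
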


Bridson has recently proven several undecidability results for right-angled Artin groups~\cite{Bri}.  These have the following consequences for the Brin-Thompson groups.

\begin{corollary}There exists an $n\geq 1$ with the following properties.  First, the isomorphism problem for finitely presented subgroups of~$nV$ is unsolvable.  Second, there exists a subgroup $H\leq nV$ that has unsolvable subgroup membership problem and unsolvable conjugacy problem.
\end{corollary}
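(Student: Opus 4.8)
The plan is to obtain this as a direct consequence of Theorem~\ref{thm:MainTheorem} and the results of Bridson in~\cite{Bri}, the only point requiring care being that all the embeddings in sight are algorithmic. From \cite{Bri} I would extract two ingredients: a right-angled Artin group $A_{\Gamma_0}$ carrying a uniformly recursive family of finite presentations of subgroups $H_i\le A_{\Gamma_0}$ for which the set of pairs $(i,j)$ with $H_i\cong H_j$ is not recursive; and right-angled Artin groups $A_{\Gamma_1}$, $A_{\Gamma_2}$ with finitely generated subgroups $H_*\le A_{\Gamma_1}$ having unsolvable membership problem and $H_{**}\le A_{\Gamma_2}$ having unsolvable conjugacy problem.

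First I would form the join $\Gamma_1\vee\Gamma_2$, so that $A_{\Gamma_1\vee\Gamma_2}\cong A_{\Gamma_1}\times A_{\Gamma_2}$ contains the finitely generated subgroup $H_*\times H_{**}$, and then apply Theorem~\ref{thm:MainTheorem} to $\Gamma_0$ and to $\Gamma_1\vee\Gamma_2$, obtaining embeddings into $n_0V$ and $mV$; setting $n=\max\{n_0,m\}$ and using the fact recalled in the introduction that $kV$ embeds into $\ell V$ whenever $k\le\ell$ (\cite{Brin}), both $A_{\Gamma_0}$ and $A_{\Gamma_1\vee\Gamma_2}$ embed into this single $nV$. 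The key structural observation is that each of these embeddings is given by an explicit rule sending a finite generating set into concretely described elements of $nV$, so the composite embedding $\phi$ of either right-angled Artin group into $nV$ is \emph{computable}: from a word in the Artin generators one can compute a standard finite description of its image in $nV$.

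Now I would run the three reductions. For the isomorphism problem: every subgroup of a right-angled Artin group is isomorphic as an abstract group to its $\phi$-image in $nV$, so Bridson's presentations present subgroups of $nV$ verbatim, and the non-recursive relation ``$H_i\cong H_j$'' is exactly the relation of abstract isomorphism among these finitely presented subgroups of $nV$; hence the isomorphism problem for finitely presented subgroups of $nV$ is unsolvable. For membership and conjugacy, put $H=\phi(H_*\times H_{**})\le nV$. Since $w\in K\iff\phi(w)\in\phi(K)$ and $\phi$ is computable, any decision procedure for membership in $H$ inside $nV$ would decide membership in $H_*\times H_{**}$ inside $A_{\Gamma_1\vee\Gamma_2}$, which decides membership in $H_*$ (fix the second coordinate); so $H$ has unsolvable membership problem. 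Likewise $\phi$ restricts to an isomorphism onto $H$, conjugacy in a direct product is coordinatewise, and so a solution to the conjugacy problem of $H$ would solve the conjugacy problem of $H_{**}$ (fix the first coordinate); so $H$ has unsolvable conjugacy problem. This $H$ is the subgroup required by the statement.

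The argument is essentially bookkeeping, and the one step that needs genuine care --- which I expect to be the main obstacle --- is justifying the effectivity claims: that the embedding built in the proof of Theorem~\ref{thm:MainTheorem}, and Brin's embeddings $kV\hookrightarrow\ell V$, really are algorithmic (which they are, the Artin generators being sent to concretely specified elements of $nV$), and that ``the isomorphism problem for finitely presented subgroups of $nV$'' is read in the form for which the above is a valid reduction, namely that no algorithm, given two finite presentations promised to present subgroups of $nV$, decides whether they present isomorphic groups. With these conventions fixed, the corollary follows.
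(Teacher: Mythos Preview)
The paper does not give a proof of this corollary at all: it simply prefaces the statement with the sentence ``Bridson has recently proven several undecidability results for right-angled Artin groups~\cite{Bri}. These have the following consequences for the Brin-Thompson groups,'' and leaves the derivation to the reader. Your proposal is therefore not so much a different approach as a careful unpacking of what the paper asserts implicitly, and it is essentially correct.

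A couple of remarks on what you add beyond the paper. First, your product trick $H_*\times H_{**}\le A_{\Gamma_1}\times A_{\Gamma_2}=A_{\Gamma_1\vee\Gamma_2}$ to manufacture a \emph{single} subgroup with both bad properties is sound (the coordinatewise reductions you describe work), though in fact Bridson's paper already produces one finitely presented subgroup of a fixed right-angled Artin group exhibiting both unsolvable conjugacy and unsolvable membership, so this step could be replaced by a direct citation. Second, you are right to isolate effectivity as the one genuine point of care, but note that it is only needed for the membership claim: the isomorphism problem transfers because the same finite presentations present (isomorphic copies of) subgroups of $nV$, and the conjugacy problem is an intrinsic property of the abstract group $H$, so both of these follow from the mere existence of an embedding. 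For membership in $H$ inside $nV$, however, one does need that words in the Artin generators can be algorithmically rewritten as elements of $nV$, and as you say the explicit construction in Section~4 (and Brin's explicit $kV\hookrightarrow \ell V$) makes this immediate.
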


As far as we know, none of the decision problems mentioned in this corollary have been settled for Thompson's group~$V$.  Thus, it is conceivable that the statement of this corollary holds for~$n=1$.

In general, the bound $n = |V| + |E^c|$ for $A_\Gamma$ embedding into $nV$ is far from sharp.  For example, our method proves that a free group of rank~$k$ embeds into $nV$ for $n=k(k+1)/2$, but in fact all such groups embed into~$V$.  However, Bleak and Salazar-D\'{\i}az show that $\mathbb{Z}^2*\mathbb{Z}$ does not embed into~$V$, and hence the only right-angled Artin groups that embed into $V$ are direct products of free groups~\cite{BlSa}.  This leads to the following conjecture.

\begin{conjecture}A right-angled Artin group $A_\Gamma$ embeds into $nV$ if and only if $A_\Gamma$ does not contain $\mathbb{Z}^{n+1}*\mathbb{Z}$.
\end{conjecture}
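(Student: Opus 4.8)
The plan is to attack the two implications separately.  The ``only if'' direction---if $A_\Gamma$ embeds into $nV$, then $A_\Gamma$ does not contain $\mathbb{Z}^{n+1}*\mathbb{Z}$---is a non-embedding statement generalizing the result of Bleak and Salazar-D\'iaz that $\mathbb{Z}^2*\mathbb{Z}$ does not embed into $V=1V$.  The ``if'' direction is a sharpened form of the construction behind Theorem~\ref{thm:MainTheorem}, replacing the bound $|V|+|E^c|$ by the conjecturally optimal value $n(\Gamma)$, which we may take to be the least $n$ for which $A_\Gamma$ does not contain $\mathbb{Z}^{n+1}*\mathbb{Z}$.

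For the forward implication it is enough to prove that $\mathbb{Z}^{n+1}*\mathbb{Z}$ does not embed into $nV$, since an embedding $A_\Gamma\hookrightarrow nV$ together with $\mathbb{Z}^{n+1}*\mathbb{Z}\leq A_\Gamma$ would then be contradictory.  To establish this I would first develop a local structure theory for finitely generated abelian subgroups of $nV$, adapting the theory of revealing pairs and flow diagrams from $V$ to the $n$-dimensional Cantor cube $C^n$.  The governing idea is that the germ of an element of $nV$ at a point of $C^n$ can contract in at most $n$ independent coordinate directions, so that the image of any homomorphism $\mathbb{Z}^{n+1}\to nV$ inside the germ group at a single point is abelian of rank at most $n$.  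Consequently, for a subgroup $A\cong\mathbb{Z}^{n+1}$ of $nV$ one should be able to produce a finite-index subgroup $A'\leq A$, a nonempty clopen box $U\subseteq C^n$, and a point $p\in U$ fixed by $A'$, on some neighbourhood of which a nontrivial subgroup $K\leq A'$ acts as the identity.  I would then argue, along the lines of Bleak--Salazar-D\'iaz, that the existence of an element $z$ with $\langle A,z\rangle=A*\langle z\rangle$ is incompatible with this forced local degeneracy: near $p$ the element $z$, or a suitable conjugate of a power of it, must either fix the germ at $p$ and hence satisfy a relation with $K$, or carry $p$ off in a way that the ping-pong dynamics dictated by the free-product normal form collides with the fact that $K$ is locally trivial near $p$ while still contributing a free factor's worth of nontrivial elements.

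For the reverse implication I would proceed in two stages.  The first is purely graph-theoretic: identify the class $\mathcal{G}_n=\set{\Gamma}{\mathbb{Z}^{n+1}*\mathbb{Z}\not\leq A_\Gamma}$.  Some care is needed, because the naive guess that $\Gamma\in\mathcal{G}_n$ exactly when $\Gamma$ has no induced $K_{n+1}\sqcup K_1$ is false: for $\Gamma=K_4$ with one edge removed we have $A_\Gamma\cong\mathbb{Z}^2\times F_2$, which contains both $\mathbb{Z}^2$ and $F_2$ yet no copy of $\mathbb{Z}^2*\mathbb{Z}$, since every $\mathbb{Z}^2$ inside meets the nontrivial centre whereas $\mathbb{Z}^2*\mathbb{Z}$ is centreless.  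So the correct description of $\mathcal{G}_n$ must encode such join and centraliser obstructions; I expect it to amount to a bound on how the vertices outside a maximal clique can attach to it, perhaps phrased through the Kim--Koberda extension graph.  The second stage is constructive: given $\Gamma\in\mathcal{G}_n$, build a demonstrative embedding $A_\Gamma\hookrightarrow nV$ using only $n$ Cantor-cube coordinates by reorganizing the embedding of Theorem~\ref{thm:MainTheorem}, assigning coordinates to cliques and to the finitely many essential branching directions isolated in the first stage and reusing a coordinate whenever the structure of $\Gamma$ guarantees that two families of generators never need to act independently in a fresh dimension.  Keeping the embedding demonstrative throughout would preserve the wreath-product consequences.

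I expect the main obstacle to be the forward direction.  A revealing-pair and flow theory for $nV$ capable of controlling commuting tuples of near-maximal rank does not yet exist in the literature in the form required, and the $n$-dimensional bookkeeping of overlapping supports and germs is substantially heavier than the one-dimensional situation handled by Bleak--Salazar-D\'iaz.  A secondary difficulty is that pinning down $\mathcal{G}_n$ precisely may itself demand new input on embeddings between right-angled Artin groups, beyond the induced-subgraph and extension-graph criteria.
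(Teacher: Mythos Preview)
The statement you are addressing is labelled a \emph{Conjecture} in the paper, and the paper offers no proof; it remarks only that the case $n=1$ has been verified by Corwin and Haymaker via the Bleak--Salazar-D\'iaz obstruction. What you have written is therefore not a proof but a research outline, and you acknowledge as much in your final paragraph. As a plan it is sensible: the forward direction genuinely would require a higher-dimensional analogue of the $\mathbb{Z}^2*\mathbb{Z}$ non-embedding, and your germ-rank heuristic is a natural starting point; the reverse direction genuinely would require both a graph-theoretic characterisation of $\mathcal{G}_n$ and a sharpened embedding construction. But none of the steps is carried out. In particular, the assertion that a copy of $\mathbb{Z}^{n+1}$ inside $nV$ must have a finite-index subgroup acting trivially near some fixed point is plausible but unproven, and even granting it, the passage from ``local degeneracy of $A$'' to ``$A*\mathbb{Z}$ does not embed'' is precisely the delicate part of the Bleak--Salazar-D\'iaz argument and does not obviously survive the move to $n$ dimensions. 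Your observation about $K_4$ minus an edge is correct and illustrates that the reverse direction is subtler than a clique condition, but this only underscores that the problem is open.

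In short: there is nothing to compare your proposal against, because the paper contains no proof of this statement. What you have submitted is a reasonable sketch of how one might attack an open problem, not a proof.
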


\begin{remark}
We have several remarks.
\begin{enumerate}
\item Firstly, we note that Corwin and Haymaker in \cite{CoHa} use the main result of Bleak--Salazar-D\'iaz to show that the only obstruction to a right-angled Artin group embedding into $V$ is the existence of a subgroup isomorphic to $\Z^2*\Z$, thus verifying the $n=1$ case of the conjecture above.

\item If the conjecture is true, this would imply that the right-angled Artin group for a $5$-cycle embeds into~$2V$, and hence all surface groups would embed into~$2V$ as well.

\item Hsu and Wise proved that right-angled Artin groups can be embedded into $\mathrm{SL}_n(\mathbb{Z})$ (see~\cite{HsWi}) and
Grigorchuk, Sushanski and Romankov showed that $\mathrm{SL}_n(\mathbb{Z})$ can be realized using synchronous automata. We observe that one can use Theorem~\ref{thm:MainTheorem}
coupled with the embedding Theorem~5.2 in \cite{BeBl} to recover a weaker version of this result, showing that right-angled Artin groups can be realized using asynchronous automata.
\end{enumerate}
\end{remark}

\section{Right-Angled Artin Groups}

Given a finite graph $\Gamma$ with vertex set $V_\Gamma = \{v_1,\ldots,v_n\}$ and edge set~$E$, the corresponding \newword{right-angled Artin group} $A_\Gamma$ is defined by the presentation
\[
A_\Gamma \,=\, \bigl\langle g_1,\ldots,g_n \;\bigl|\; g_ig_j=g_jg_i\text{ for all }\{v_i,v_j\}\in E\bigr\rangle.
\]
For example, if $\Gamma$ has no edges, then $A_\Gamma$ is a free group on $n$ generators.  Similarly, if $\Gamma$ is a complete graph, then $A_\Gamma$ is a free abelian group of rank~$n$.  See \cite{Char} for a general introduction to these groups.

We need a version of the ping-pong lemma for actions of right-angled Artin groups. The following is a slightly modified version of the ping-pong lemma for right-angled Artin groups stated in~\cite{CrFa} (also
see \cite{Kob}).

\begin{theorem}[Ping-Pong Lemma for Right-Angled Artin Groups]
\label{thm:ping-pong}
Let $A_\Gamma$ be a right-angled Artin group with generators $g_1,\ldots,g_n$ acting on a set~$X$.  Suppose that there exist subsets $\{S_i^+\}_{i=1}^n$ and $\{S_i^-\}_{i=1}^n$ of~$X$, with $S_i = S_i^+\cup S_i^-$, satisfying the following conditions:
\begin{enumerate}
\item $g_i(S_i^+) \subseteq S_i^+$ and $g_i^{-1}(S_i^-) \subseteq S_i^-$ for all~$i$.\smallskip
\item If $g_i$ and $g_j$ commute (with $i\ne j$), then $g_i(S_j) = S_j$.\smallskip
\item If $g_i$ and $g_j$ do not commute, then $g_i(S_j) \subseteq S_i^+$ and $g_i^{-1}(S_j) \subseteq S_i^-$.\smallskip
\item There exists a point $x \in X - \bigcup_{i=1}^n S_i$ such that $g_i(x) \in S_i^+$ and $g_i^{-1}(x) \in S_i^-$ for all~$i$.
\end{enumerate}
Then the action of $A_\Gamma$ on $X$ is faithful.\hfill\qedsymbol
\end{theorem}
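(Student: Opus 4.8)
The plan is to take an arbitrary nontrivial element $w \in A_\Gamma$, put it into a suitable normal form, and track the orbit of the base point $x$ from condition~(4) through the action of $w$, showing it lands inside some $S_i^+$ or $S_i^-$ and in particular is not equal to $x$ (since $x \notin \bigcup_i S_i$). First I would recall the normal-form theory for right-angled Artin groups: every nontrivial $w$ can be written as a reduced word $g_{i_k}^{\epsilon_k}\cdots g_{i_1}^{\epsilon_1}$ with $\epsilon_j = \pm 1$, where ``reduced'' means no sequence of allowed swaps (commuting adjacent generators) produces a cancellation $g_i^{\epsilon}g_i^{-\epsilon}$. A convenient invariant is that the leftmost letter $g_{i_k}^{\epsilon_k}$ is well-defined up to the choice among several letters that can all be shuffled to the front; pick one such, say with index $i = i_k$ and sign $\epsilon = \epsilon_k$.

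The key step is an induction on the syllable length $k$ of the reduced word, with the following strengthened claim: if $w$ is reduced with a chosen leftmost letter $g_i^{\epsilon}$, then $w(x) \in S_i^{\epsilon}$ (writing $S_i^{+1} = S_i^+$ and $S_i^{-1} = S_i^-$). For the base case $k=1$ this is exactly condition~(4). For the inductive step, write $w = g_i^{\epsilon} w'$ where $w'$ is reduced of length $k-1$; choose a leftmost letter $g_j^{\delta}$ of $w'$. By induction $w'(x) \in S_j^{\delta} \subseteq S_j$. Now there are three cases according to the relationship between $g_i$ and $g_j$. If $j = i$, then because $w$ is reduced we must have $\delta = \epsilon$ (otherwise the two letters would cancel after moving $g_j^\delta$ to the front, which is legal since it is a leftmost letter), so $w'(x)\in S_i^\epsilon$, and condition~(1) gives $g_i^\epsilon(w'(x)) \in S_i^\epsilon$. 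If $g_i$ and $g_j$ commute but $i \ne j$: here, since $w$ is reduced and $g_j^\delta$ can be shuffled to the front of $w'$, the letter $g_i^\epsilon$ must actually appear in $w$ in a position where it cannot be cancelled; more to the point, condition~(2) gives $g_i^\epsilon(S_j) = S_j$, so $g_i^\epsilon(w'(x)) \in S_j$ — but we need it in $S_i^\epsilon$. This is exactly the subtle point: when $g_i$ and $g_j$ commute we cannot conclude membership in $S_i^\epsilon$ directly, so the induction hypothesis must be refined to keep a \emph{set} of admissible leftmost indices rather than one, and assert $w(x) \in S_\ell^{\eta}$ for \emph{every} leftmost letter $g_\ell^\eta$ of $w$; commuting generators at the front of $w$ then all have their sets preserved compatibly. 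If $g_i$ and $g_j$ do not commute, condition~(3) gives $g_i^\epsilon(w'(x)) \subseteq g_i^\epsilon(S_j) \subseteq S_i^+$, and the $\epsilon = -1$ case uses the second half of~(3); this is the clean case.

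The main obstacle, as flagged above, is handling commuting generators correctly: a single ``leftmost letter'' is not canonical, so the induction statement has to be phrased in terms of the \emph{set} of all possible leftmost letters of the reduced word $w$, proving simultaneously that $w(x)\in S_\ell^{\eta}$ for each such $g_\ell^{\eta}$. With that strengthening, the commuting case goes through: if $g_i^\epsilon$ is being prepended and $g_\ell^\eta$ is any leftmost letter of $w = g_i^\epsilon w'$ with $\ell \ne i$, then $g_\ell^\eta$ is also a leftmost letter of $w'$ (since $g_i$ can be moved past it only if they commute, and then $\ell$ stays leftmost), so $w'(x) \in S_\ell^\eta$; if $g_i, g_\ell$ commute then condition~(2) preserves $S_\ell$ — and one refines further to track $S_\ell^\eta$ rather than $S_\ell$ by noting the prepended $g_i^\epsilon$ does not interfere — while if they do not commute then $\ell = i$, contradiction. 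Once the strengthened claim is established, faithfulness follows immediately: for nontrivial reduced $w$ we get $w(x) \in S_i$ for some $i$, while $x \notin \bigcup_i S_i$, so $w(x) \ne x$ and hence $w$ acts nontrivially. I would finish by remarking that this is the standard ping-pong argument for RAAGs as in~\cite{CrFa, Kob}, with the conditions~(1)--(4) tailored to make the orbit-tracking induction clean.
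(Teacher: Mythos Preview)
The paper does not actually prove this theorem; it is stated with a \qedsymbol and attributed to~\cite{CrFa,Kob} as a known result. So there is no proof in the paper to compare against, and your proposal is an attempt to supply one.

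Your overall strategy---reduce to a normal form, track the orbit of the basepoint~$x$, and induct on word length---is the standard one and is sound. However, there is a real gap in your treatment of the commuting case. You propose the strengthened inductive hypothesis that $w(x)\in S_\ell^{\eta}$ for \emph{every} leftmost letter $g_\ell^{\eta}$ of~$w$, and then in the inductive step you write that when $g_i$ and $g_\ell$ commute, ``condition~(2) preserves $S_\ell$---and one refines further to track $S_\ell^{\eta}$ rather than $S_\ell$ by noting the prepended $g_i^{\epsilon}$ does not interfere.'' This last assertion is not justified: condition~(2) only says $g_i(S_\ell)=S_\ell$, not $g_i(S_\ell^{\pm})=S_\ell^{\pm}$, and in fact one can build examples satisfying (1)--(4) in which $g_i$ swaps portions of $S_\ell^{+}$ and $S_\ell^{-}$. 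So the strong inductive claim you want is simply false under the stated hypotheses.

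The fix is to weaken the inductive hypothesis appropriately. One clean formulation: prove that (i)~$w(x)\in S_\ell$ for every $\ell$ with a leftmost letter $g_\ell^{\eta}$ in~$w$, and (ii)~if $w$ has a \emph{unique} leftmost letter $g_\ell^{\eta}$, then moreover $w(x)\in S_\ell^{\eta}$. For~(i), when there are at least two leftmost letters, peel off one with index $m\neq\ell$; then $\ell$ remains initial in the shorter word, the inductive hypothesis gives membership in $S_\ell$, and condition~(2) preserves it. For~(ii), peel off $g_\ell^{\eta}$ itself: every leftmost letter of the remaining word either has index~$\ell$ (same sign, by reducedness) or fails to commute with $g_\ell$ (else it would have been leftmost in~$w$ too); in the first case use~(ii) inductively and condition~(1), in the second use~(i) inductively and condition~(3). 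Faithfulness then follows exactly as you say.
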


Indeed, if $U$ is any subset of $X - \bigcup_{i=1}^n S_i$ such that $g_i(U) \subseteq S_i^+$ and $g_i^{-1}(x) \in S_i^-$, then all of the sets $\{g(U) \mid g\in A_\Gamma\}$ are disjoint.  In the case where $X$ is a topological space and $U$ is an open set, this means that the action of $A_\Gamma$ on $X$ is demonstrative in the sense of~\cite{BlSa}.

\section{The Groups $nV$ and $XV$}

Given a finite alphabet $\Sigma$, let $\Sigma^\omega$ denote the space of all strings of symbols from $\Sigma$ under the product topology, and let $\Sigma^*$ denote the set of all finite strings of symbols from $\Sigma$, including the empty string.

A \newword{Cantor cube} is any finite product $X =\Sigma_1^\omega\times \cdots \times \Sigma_n^\omega$, where $\Sigma_1,\ldots,\Sigma_n$ are finite alphabets with at least two symbols each.  Given any tuple $(\alpha_1,\ldots,\alpha_n) \in \Sigma_1^*\times\cdots\times \Sigma_n^*$, the corresponding \newword{subcube} $X(\alpha_1,\ldots,\alpha_n)$ of~$X$ is the set of all points $(x_1,\ldots,x_n)\in X$ such that $x_i$ begin with~$\alpha_i$ for each~$i$.  Note that $X$ is homeomorphic to $X(\alpha_1,\ldots,\alpha_n)$ via the map
\[
(x_1,\ldots,x_n) \;\mapsto\; (\alpha_1\cdot x_1,\ldots,\alpha_n\cdot x_n)
\]
where $\cdot$ denotes concatenation.  More generally, any two subcubes $X(\alpha_1,\ldots,\alpha_n)$ and $X(\beta_1,\ldots,\beta_n)$ of~$X$ have a \newword{canonical homeomorphism} between them given by prefix replacement, i.e.
\[
(\alpha_1\cdot x_1,\ldots,\alpha_n\cdot x_n) \;\mapsto\; (\beta_1\cdot x_1,\ldots,\beta_n\cdot x_n).
\]
A \newword{rearrangement} of a Cantor cube $X$ is any homeomorphism of~$X$ obtained through the following procedure:
\begin{enumerate}
\item Choose a partition $D_1,\ldots,D_k$ of the domain $X$ into finitely many subcubes.\smallskip
\item Choose another partition $R_1,\ldots,R_k$ of the range $X$ into the same number of subcubes.\smallskip
\item Define a homeomorphism $h\colon X\to X$ piecewise by mapping each $D_i$ to $R_i$ via a canonical homeomorphism.
\end{enumerate}
The rearrangements of~$X$ form a group under composition, which we refer to as~$XV$.  In the case where $X = (\{0,1\}^\omega)^n$, the group $XV$ is known as the \newword{Brin-Thompson group $\boldsymbol{nV}$}.

\begin{proposition}If $X = \Sigma_1^\omega\times \cdots \times \Sigma_n^\omega$ is any Cantor cube, then the rearrangement group~$XV$ of~$X$ embeds into the Brin-Thompson group~$nV$.
\end{proposition}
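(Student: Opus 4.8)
The plan is to encode each one-sided factor $\Sigma_i^\omega$ into $\{0,1\}^\omega$ by means of a \emph{complete} binary prefix code, assemble these into a single homeomorphism $\phi\colon X\to(\{0,1\}^\omega)^n$, and then check that conjugation by $\phi$ carries $XV$ into $nV$.

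First I would fix, for each $i$, a family $\{w_a : a\in\Sigma_i\}$ of binary words that is a \emph{maximal antichain} in $\{0,1\}^*$ under the prefix order; equivalently, the $w_a$ are the leaves of a finite rooted binary tree in which every internal vertex has exactly two children. Such a code of any cardinality $\geq 2$ exists (for instance $\{0,10,110,\ldots,1^{m-2}0,1^{m-1}\}$, or by induction, repeatedly splitting a leaf into two children). The key features of such a code are: each $w_a$ has length $\geq 1$; distinct $w_a,w_{a'}$ are incomparable; and every $y\in\{0,1\}^\omega$ has a unique prefix among the $w_a$, hence factors uniquely as an infinite concatenation $w_{a_1}w_{a_2}\cdots$ of codewords. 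This yields a map $\phi_i\colon\Sigma_i^\omega\to\{0,1\}^\omega$, $(a_1,a_2,\ldots)\mapsto w_{a_1}w_{a_2}\cdots$, which I would verify to be a bijection (injectivity from the antichain property, surjectivity from maximality of the code), and therefore a homeomorphism, since $\Sigma_i^\omega$ is compact, $\{0,1\}^\omega$ is Hausdorff, and $\phi_i$ is continuous (the first $m$ symbols of $\phi_i(a_1,a_2,\ldots)$ depend only on $a_1,\ldots,a_m$). I would also record the induced map $\phi_i^*\colon\Sigma_i^*\to\{0,1\}^*$, $a_1\cdots a_k\mapsto w_{a_1}\cdots w_{a_k}$, together with the compatibility identity $\phi_i(\alpha\cdot x)=\phi_i^*(\alpha)\cdot\phi_i(x)$ for all $\alpha\in\Sigma_i^*$, $x\in\Sigma_i^\omega$.

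Next, put $\phi=\phi_1\times\cdots\times\phi_n\colon X\to(\{0,1\}^\omega)^n$, a homeomorphism, and consider the associated group isomorphism $h\mapsto\phi h\phi^{-1}$ between the full homeomorphism groups of $X$ and of $(\{0,1\}^\omega)^n$. The heart of the argument is to show this isomorphism sends $XV$ into $nV$. Two points are needed: (i) using surjectivity of each $\phi_i$ and the compatibility identity, $\phi$ carries the subcube $X(\alpha_1,\ldots,\alpha_n)$ onto the subcube $(\{0,1\}^\omega)^n\bigl(\phi_1^*(\alpha_1),\ldots,\phi_n^*(\alpha_n)\bigr)$, so $\phi$ takes any partition of $X$ into finitely many subcubes to a partition of $(\{0,1\}^\omega)^n$ into finitely many subcubes; and (ii) again by the compatibility identity, $\phi$ conjugates the canonical prefix‑replacement homeomorphism between two subcubes of $X$ to the canonical prefix‑replacement homeomorphism between the corresponding subcubes of $(\{0,1\}^\omega)^n$. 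Combining these, if $h\in XV$ is assembled from a domain partition $D_1,\ldots,D_k$, a range partition $R_1,\ldots,R_k$, and canonical maps $D_i\to R_i$, then $\phi h\phi^{-1}$ is assembled from the subcube partitions $\phi(D_1),\ldots,\phi(D_k)$ and $\phi(R_1),\ldots,\phi(R_k)$ via the canonical maps between them, so $\phi h\phi^{-1}\in nV$. Since $h\mapsto\phi h\phi^{-1}$ is injective, this embeds $XV$ into $nV$, which is the claim.

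The only subtlety that needs attention is the insistence that the chosen codes be \emph{complete} (maximal antichains): this is precisely what makes each $\phi_i$, and hence $\phi$, surjective, so that conjugation by $\phi$ is an isomorphism of the full homeomorphism groups and $\phi h\phi^{-1}$ is genuinely defined on all of $(\{0,1\}^\omega)^n$. Once this is arranged, steps (i) and (ii) are routine prefix bookkeeping from the identity $\phi_i(\alpha\cdot x)=\phi_i^*(\alpha)\cdot\phi_i(x)$, and no further difficulty arises.
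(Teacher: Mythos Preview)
Your proposal is correct and follows essentially the same approach as the paper: choose a complete binary prefix code for each alphabet $\Sigma_i$, build the coordinatewise homeomorphism $\phi\colon X\to(\{0,1\}^\omega)^n$, observe that $\phi$ carries subcubes to subcubes and intertwines canonical homeomorphisms, and conclude that conjugation by $\phi$ embeds $XV$ into $nV$. Your write-up is more detailed than the paper's (which simply asserts that the product map takes subcubes to subcubes and that conjugation yields a subgroup of $nV$), but the underlying argument is identical.
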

\begin{proof}For each $i$, let $\lambda_{i,1},\lambda_{i,2},\ldots,\lambda_{i,m_i}$ denote the symbols of~$\Sigma_i$. For each $i$ we choose a complete binary prefix code $\alpha_{i,1},\alpha_{i,2},\ldots,\alpha_{i,m_i}$ with $m_i$ different codewords.  That is, we choose a finite rooted binary tree with~$m_i$ leaves, and we let $\alpha_{i,1},\alpha_{i,2},\ldots,\alpha_{i,m_i}$ be the binary addresses of these leaves.  Then the map
\[
\lambda_{i,j_1} \cdot \lambda_{i,j_2} \cdot \lambda_{i,j_3} \cdots \;\mapsto\; \alpha_{i,j_1}\cdot \alpha_{i,j_2} \cdot \alpha_{i,j_3} \cdots
\]
gives a homeomorphism from $\Sigma_i^\omega$ to $\{0,1\}^\omega$.  Taking the Cartesian product gives a homeomorphism $X\to (\{0,1\}^\omega)^n$ which maps each subcube of~$X$ to a subcube of $(\{0,1\}^\omega)^n$, and it is easy to check that conjugating $XV$ by this homeomorphism yields a subgroup of~$nV$.
\end{proof}

\section{Embedding Right-Angled Artin Groups}

The goal of this section is to prove Theorem~\ref{thm:MainTheorem}.  That is, we wish to embed any right-angled Artin group into some~$nV$.

Let $A_\Gamma$ be a right-angled Artin group with generators $g_1,\ldots,g_n$.  For convenience, we assume that none of the generators $g_i$ lie in the center of~$A_\Gamma$.  For in this case $A_\Gamma \cong A'_{\Gamma'}\times \mathbb{Z}$ for some right-angled Artin group $A'_{\Gamma'}$ with fewer generators, and since $sV\times \mathbb{Z}$ embeds in~$sV$, any embedding $A'_{\Gamma'} \to kV$ yields an embedding $A_\Gamma\to kV$.

Let $P$ be the set of all pairs $\{i,j\}$ for which $g_ig_j \ne g_jg_i$, and note that each $i\in\{1,\ldots,n\}$ lies in at least one element of~$P$. Let $X$ be the following Cantor cube:
\[
X \;=\; \prod_{i=1}^n \{0,1\}^\omega \;\times \prod_{\{i,j\}\in P} \{i,j,\emptyset\}^\omega.
\]
Our goal is to prove the following theorem.

\begin{theorem}The group $A_\Gamma$ embeds into $XV$, and hence embeds into $kV$ for $k=n+|P|$.
\end{theorem}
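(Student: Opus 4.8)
The plan is to exhibit a faithful action of $A_\Gamma$ on $X$ by rearrangements; since $X$ has exactly $n+|P|$ tensor factors, the preceding Proposition then embeds $XV$ — and hence $A_\Gamma$ — into $(n+|P|)V$, so the whole problem reduces to constructing the action and checking faithfulness. For faithfulness I would invoke the ping-pong lemma for right-angled Artin groups, Theorem~\ref{thm:ping-pong}.

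For the action, I would attach to each generator $g_i$ a rearrangement $\hat g_i\in XV$ whose support lies entirely in the coordinates \emph{owned by $i$}: the $i$-th binary factor $\{0,1\}^\omega$ together with the ternary factor $\{i,j,\emptyset\}^\omega$ for every $j$ with $\{i,j\}\in P$. On the ternary factor of a non-commuting pair $\{i,j\}$ — owned by both $i$ and $j$ — I want $\hat g_i$ to act as a ``north--south'' homeomorphism: the cylinder $[i]$ of strings beginning with the letter $i$ is attracting, $\hat g_i([i])\subseteq[i]$; there is a repelling subcube disjoint from $[i]$ and from $[j]$; and $\hat g_i$ sends everything outside that repelling subcube — in particular all of $[j]$ — into $[i]$, while $\hat g_i^{-1}$ does the reverse, sending everything outside $[i]$ into the repelling subcube. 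On its binary factor $\hat g_i$ again acts with north--south dynamics. The third letter $\emptyset$ carries no dynamics; it is there only to leave a ``neutral'' region on each ternary factor, disjoint from all the attracting and repelling subcubes, so that the ping-pong basepoint has somewhere to live: $x$ would be the point all of whose coordinates equal $0^\omega$ or $\emptyset^\omega$.

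The sets $S_i^{\pm}$ would be read off from the attracting and repelling subcubes of the coordinates owned by $i$: informally $S_i^+$ records that ``$g_i$ has just acted'' (its owned ternary coordinates all sit in their letter-$i$ cylinders, together with bookkeeping on the binary factor to keep $S_i^+$ disjoint from $S_i^-$), and $S_i^-$ is the mirror statement for $g_i^{-1}$. Conditions~(1) and~(4) of Theorem~\ref{thm:ping-pong} are then coordinate-wise consequences of the north--south behaviour. Condition~(2), together with the fact that $g_i\mapsto\hat g_i$ respects the defining relations of $A_\Gamma$, comes from a single observation: if $g_ig_j=g_jg_i$, i.e.\ $\{i,j\}\notin P$, then $i$ and $j$ own disjoint sets of coordinates, so $\hat g_i$ and $\hat g_j$ have disjoint support — hence they commute, and $\hat g_i$ fixes every coordinate entering $S_j$, so $\hat g_i(S_j)=S_j$. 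When instead $g_ig_j\neq g_jg_i$, the two generators share the ternary factor of $\{i,j\}$; there $\hat g_i$ carries $[j]$ (the region where $S_j$ lives on that factor) into $[i]$, which is what is needed for condition~(3), $\hat g_i(S_j)\subseteq S_i^+$ and $\hat g_i^{-1}(S_j)\subseteq S_i^-$, and the same shared factor — on which $\hat g_i$ and $\hat g_j$ are north--south maps with distinct attracting cylinders — shows that $\hat g_i$ and $\hat g_j$ do not commute. Since the basepoint can be thickened to an open set meeting the same inclusions, the resulting action is moreover demonstrative, as promised in the Introduction.

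The hard part, I expect, is making conditions~(2) and~(3) coexist: $\hat g_i$ must leave $S_j$ untouched when $g_i$ and $g_j$ commute, yet pull all of $S_j$ into $S_i^+$ when they do not. This is exactly what dictates the shape of $X$ — one ternary ``arena'' per non-commuting pair, shared by precisely the two generators of that pair — and it is also what forces $\hat g_i$ to couple its action across all of its owned coordinates, since the single set $S_i^+$ has to absorb $\hat g_i(S_j)$ for every $j$ non-adjacent to $i$ simultaneously, even though $S_j$ constrains, among the coordinates owned by $i$, only the arena $\{i,j\}$. Constructing explicit rearrangements $\hat g_i$ with all of these properties and then verifying the four ping-pong conditions in detail is the technical core of the argument; once that is done, the embeddings $A_\Gamma\hookrightarrow XV\hookrightarrow (n+|P|)V$ follow at once.
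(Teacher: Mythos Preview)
Your plan is essentially the paper's own argument: define rearrangements $h_i$ supported on the coordinates owned by~$i$ (the $i$-th binary factor and the ternary factors $\{i,j,\emptyset\}^\omega$ for $j\in P_i$), get commutation from disjoint support when $\{i,j\}\notin P$, and verify faithfulness via the right-angled Artin ping-pong lemma with basepoint in the $\emptyset$-cylinders.

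One point where your sketch diverges from what the paper actually does, and which you should adjust when you write out the details: you locate the ``repelling subcube'' for $\hat g_i$ on each ternary factor outside the cylinder~$[i]$. In the paper's construction both $S_i^+$ and $S_i^-$ lie \emph{inside} $S_i$ (i.e.\ inside the $[i]$-cylinder on every ternary factor owned by~$i$); the distinction between $+$ and $-$ is made entirely on the binary factor, $S_i^{+}=S_i\cap C_i(1)$ and $S_i^{-}=S_i\cap C_i(0)$. This is why the binary factor is there at all --- not merely for extra bookkeeping, but as the sole carrier of the $\pm$ split --- and it is what makes the coupling you anticipate work cleanly: $h_i$ throws all of $X\setminus S_i$ (hence every non-commuting $S_j$) into $S_i\cap C_i(10)\subseteq S_i^{+}$, and the inverse throws it into $S_i\cap C_i(01)\subseteq S_i^{-}$. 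With that correction your outline matches the paper's proof.
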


We begin by establishing some notation:
\begin{enumerate}
\item For each point $x\in X$, we will denote its components by $\{x_i\}_{i\in\{1,\ldots,n\}}$ and $\{x_{ij}\}_{\{i,j\}\in P}$.\smallskip
\item Given any $i\in\{1,\ldots,n\}$ and $\alpha\in \{0,1\}^*$, let $C_i(\alpha)$ be the subcube consisting of all $x\in X$ for which $x_i$ begins with~$\alpha$. Let $L_{i,\alpha}\colon X\to C_i(\alpha)$ be the canonical homeomorphism, i.e.~the map that prepends $\alpha$ to~$x_i$.\smallskip
\item For each $i\in\{1,\ldots,n\}$, let $P_i$ be the set of all $j$ for which $\{i,j\}\in P$, and let $S_i$ be the subcube consisting of all $x\in X$ such that $x_{ij}$ begins with $i$ for all $j\in P_i$.  Let $F_i\colon X\to S_i$ be the canonical homeomorphism, i.e.~the map that prepends $i$ to~$x_{ij}$ for each $j\in P_i$.\smallskip
\item Let $S_{ii} = F_i(S_i)=F_i^2(X)$, i.e.~the subcube consisting of all $x\in X$ such that $x_{ij}$ begins with $ii$ for each $j\in P_i$.
\end{enumerate}
Now, for each $i\in\{1,\ldots,n\}$, define a homeomorphism $h_i\colon X\to X$ as follows:
\begin{enumerate}
\item $h_i$ maps $X - S_i$ to $(S_i-S_{ii})\cap C_i(10)$ via $L_{i,10}\circ F_i$.\smallskip
\item $h_i$ is the identity on $S_{ii}$.\smallskip
\item $h_i$ maps $(S_i-S_{ii})\cap C_i(1)$ to $(S_i-S_{ii})\cap C_i(11)$ via $L_{i,1}$.\smallskip
\item $h_i$ maps $(S_i-S_{ii})\cap C_i(01)$ to $X-S_i$ via $F_i^{-1}\circ L_{i,01}^{-1}$.\smallskip
\item $h_i$ maps $(S_i-S_{ii})\cap C_i(00)$ to $(S_i-S_{ii})\cap C_i(0)$ via $L_{i,0}^{-1}$.\smallskip
\end{enumerate}
Note that the five domain pieces form a partition of~$X$, and each is the union of finitely many subcubes.  Similarly, the five range pieces form a partition of~$X$, and each is the union of finitely many subcubes.  Since each of the maps is a restriction of a canonical homeomorphism, it follows that $h_i$ is an element of~$XV$.

\begin{proposition}For each $i,j\in\{1,\ldots,n\}$, if $g_i$ and $g_j$ commute, then so do $h_i$ and $h_j$.
\end{proposition}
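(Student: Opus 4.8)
The plan is to show that, when $g_i$ and $g_j$ commute, the homeomorphisms $h_i$ and $h_j$ are ``supported on disjoint blocks of coordinates'' of the Cantor cube $X$; homeomorphisms acting on disjoint tensor factors automatically commute.

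If $i=j$ there is nothing to prove, so assume $i\neq j$. Since $g_i$ and $g_j$ commute we have $\{i,j\}\notin P$, hence $j\notin P_i$ and $i\notin P_j$. Let $A$ be the block of coordinates of $X$ consisting of the $i$th binary factor $\{0,1\}^\omega$ together with the ternary factors $\{i,k,\emptyset\}^\omega$ for $k\in P_i$, and let $B$ be the analogous block attached to the index $j$. I would first check that $A\cap B=\emptyset$: the two distinguished binary factors differ because $i\neq j$, and a ternary factor $\{i,k,\emptyset\}^\omega$ occurring in $A$ can equal one of the form $\{j,l,\emptyset\}^\omega$ occurring in $B$ only if $\{i,k\}=\{j,l\}=\{i,j\}$, which is impossible because $\{i,j\}\notin P$. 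Writing $C$ for the remaining coordinates, this gives a decomposition $X=X_A\times X_B\times X_C$ into a product of Cantor cubes.

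The crux is the claim that $h_i=\bar h_i\times\mathrm{id}_{X_B\times X_C}$ for some homeomorphism $\bar h_i$ of $X_A$; equivalently, that every ingredient in the five-piece definition of $h_i$ is confined to the block $A$. This I would verify by inspection: the subcubes $S_i$, $S_{ii}$ and $C_i(\alpha)$ appearing in the domain and range pieces are each cut out by conditions involving only the $A$-coordinates (the defining conditions for $S_i$ and $S_{ii}$ refer to $x_{ik}$ with $k\in P_i$, and $C_i(\alpha)$ refers to $x_i$), so all ten pieces are unions of subcubes described purely by prefix restrictions on the $A$-coordinates; and each canonical homeomorphism used to build $h_i$ --- namely $F_i^{\pm1}$, $L_{i,\alpha}^{\pm1}$ and their composites --- only prepends to or deletes from the $A$-coordinates. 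Hence for every $x\in X$ the $A^c$-block of $h_i(x)$ coincides with that of $x$, while the $A$-block of $h_i(x)$ is determined by the $A$-block of $x$ alone, which is precisely the assertion $h_i=\bar h_i\times\mathrm{id}_{X_{A^c}}$. The same reasoning gives $h_j=\bar h_j\times\mathrm{id}_{X_{B^c}}$.

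Finally, with respect to $X=X_A\times X_B\times X_C$ we have $h_i=\bar h_i\times\mathrm{id}_{X_B}\times\mathrm{id}_{X_C}$ and $h_j=\mathrm{id}_{X_A}\times\bar h_j\times\mathrm{id}_{X_C}$, so both composites $h_ih_j$ and $h_jh_i$ equal $\bar h_i\times\bar h_j\times\mathrm{id}_{X_C}$, and $h_i$ and $h_j$ commute. The one place demanding care is the coordinate bookkeeping in the previous paragraph --- checking that nothing in the construction of $h_i$ reaches outside the block $A$ --- and this is exactly the point of having introduced the separate ternary factors $\{i,j,\emptyset\}^\omega$ in the definition of $X$.
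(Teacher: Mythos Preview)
Your proof is correct and follows exactly the same idea as the paper's: show that $h_i$ reads and modifies only the coordinates $x_i$ and $\{x_{ik}\}_{k\in P_i}$, observe that these coordinate sets are disjoint for commuting $g_i,g_j$, and conclude that $h_i,h_j$ commute as homeomorphisms acting on disjoint tensor factors. The paper compresses this into two sentences, whereas you spell out the coordinate bookkeeping (in particular the check that no ternary factor lies in both $A$ and $B$) explicitly.
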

\begin{proof}Observe that $h_i(x)$ is completely determined by $x_i$ and $\{x_{ij}\}_{j\in P_i}$, and only changes these coordinates of~$x$.  If $g_i$ and $g_j$ commute, then the relevant sets of coordinates for $h_i$ and $h_j$ do not overlap, and hence $h_i$ and $h_j$ commute.
\end{proof}

Thus we can define a homomorphism $\Phi\colon A_\Gamma \to XV$ by $\Phi(g_i) = h_i$ for each~$i$.

\begin{proposition}The homomorphism $\Phi$ is injective.
\end{proposition}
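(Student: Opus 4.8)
The plan is to apply the Ping-Pong Lemma for Right-Angled Artin Groups (Theorem~\ref{thm:ping-pong}) to the action of $A_\Gamma$ on $X$ induced by $\Phi$.  The crucial choice is, for each $i$, to set $S_i^+ = S_i\cap C_i(1)$ and $S_i^- = S_i\cap C_i(0)$, where $C_i(0)$ and $C_i(1)$ are the subcubes defined above; since $C_i(0)\cup C_i(1)=X$, we get $S_i=S_i^+\cup S_i^-$ as required.  With this choice each of the four hypotheses of the lemma should follow by keeping track of the five domain and range pieces of $h_i$ and recording which of them lie inside $C_i(0)$, inside $C_i(1)$, inside $S_i$, or inside $X-S_i$.

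For hypothesis~(1): $h_i$ is the identity on $S_{ii}\subseteq S_i$, and clause~(3) of the definition carries $(S_i-S_{ii})\cap C_i(1)$ onto $(S_i-S_{ii})\cap C_i(11)$; since $C_i(11)\subseteq C_i(1)$, this yields $h_i(S_i^+)\subseteq S_i^+$.  Dually, $h_i^{-1}$ fixes $S_{ii}$ pointwise, and the only range piece of $h_i$ meeting $(S_i-S_{ii})\cap C_i(0)$ is the fifth one, whose domain $(S_i-S_{ii})\cap C_i(00)$ lies in $S_i\cap C_i(0)$; hence $h_i^{-1}(S_i^-)\subseteq S_i^-$.

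Hypothesis~(2) is exactly the content of the preceding proposition: if $g_i$ and $g_j$ commute, the coordinates that $h_i$ moves (namely $x_i$ and $\{x_{ik}\}_{k\in P_i}$) are disjoint from the coordinates defining $S_j$ (namely $\{x_{jk}\}_{k\in P_j}$), so $h_i(S_j)=S_j$.  For hypothesis~(3), suppose $g_i$ and $g_j$ do not commute, so $\{i,j\}\in P$.  Then the single coordinate $x_{ij}$ is required to begin with $i$ in the definition of $S_i$ and with $j$ in the definition of $S_j$, so $S_i\cap S_j=\emptyset$, i.e.\ $S_j\subseteq X-S_i$.  By clause~(1) of the definition of $h_i$ we have $h_i(X-S_i)=(S_i-S_{ii})\cap C_i(10)\subseteq S_i^+$, and by clause~(4), $h_i^{-1}(X-S_i)=(S_i-S_{ii})\cap C_i(01)\subseteq S_i^-$; hence $h_i(S_j)\subseteq S_i^+$ and $h_i^{-1}(S_j)\subseteq S_i^-$.

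Finally, for hypothesis~(4) I would take the base point $x\in X$ all of whose $P$-coordinates are constant, $x_{ij}=\emptyset\,\emptyset\,\emptyset\cdots$ for every $\{i,j\}\in P$ (the vertex-coordinates may be chosen arbitrarily).  Since each $i$ lies in at least one pair of $P$ and $x_{ij}$ then begins with neither $i$ nor $j$, the point $x$ lies in $X-S_i$ for every $i$, that is, $x\in X-\bigcup_i S_i$.  Being in $X-S_i$, clauses~(1) and~(4) give $h_i(x)\in(S_i-S_{ii})\cap C_i(10)\subseteq S_i^+$ and $h_i^{-1}(x)\in(S_i-S_{ii})\cap C_i(01)\subseteq S_i^-$, as needed.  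The Ping-Pong Lemma then shows that the action of $A_\Gamma$ on $X$ is faithful, so $\Phi$ is injective.  I do not expect any single step to be a real obstacle; the only point requiring genuine care is the bookkeeping in hypothesis~(1), namely matching each of the five pieces of $h_i$ and of $h_i^{-1}$ to the regions $C_i(0)$, $C_i(1)$, $S_i$, and $X-S_i$.
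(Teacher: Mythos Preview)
Your proposal is correct and follows essentially the same approach as the paper's proof: you choose the same sets $S_i^\pm = S_i\cap C_i(1)$ and $S_i\cap C_i(0)$, verify the four ping-pong hypotheses by the same case analysis on the five pieces of $h_i$, and pick the same base point with $\emptyset$-initial $P$-coordinates. If anything, your treatment of hypothesis~(1) is more careful than the paper's (which asserts $h_i(S_i^+)=S_i\cap C_i(11)$, overlooking the points of $S_{ii}\cap C_i(1)$ fixed by $h_i$; only the inclusion $h_i(S_i^+)\subseteq S_i^+$ is needed and both arguments establish it). One minor wording point: hypothesis~(2) is not literally ``the content of the preceding proposition'' (which concludes that $h_i$ and $h_j$ commute), but rather follows from the same coordinate-disjointness observation used there, exactly as you go on to explain.
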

\begin{proof}For each $i$, let $S_i^+ = S_i \cap C_i(1)$, and let $S_i^{-} = S_i\cap C_i(0)$.  These two sets form a partition of~$S_i$, with
\[
h_i(S_i^+) = S_i\cap C_i(11) \subseteq S_i^+
\qquad\text{and}\qquad
h_i^{-1}(S_i^{-}) = S_i\cap C_i(00) \subseteq S_i^-.
\]
Now suppose we are given two generators $g_i$ and $g_j$.  If $g_i$ and $g_j$ commute, then clearly $h_i(S_j) = S_j$.  If $g_i$ and $g_j$ do not commute, then $S_j \subseteq X-S_i$, and therefore $h_i(S_j)\subseteq S_i^+$ and $h_i^{-1}(S_j)\subseteq S_i^-$.

Finally, let $x$ be an point in $X$ such that $x_{ij}$ starts with $\emptyset$ for all $\{i,j\}\in P$.  Then $x\in X-S_i$ for all~$i$, so $h_i(x) \in S_i^+$ and $h_i^{-1}(x) \in S_i^-$. The homomorphism $\Phi$
is thus injective by Theorem \ref{thm:ping-pong}.
\end{proof}

This proves our main theorem.  Note further that, if $U$ is the open subset of~$X$ consisting of all points $x\in X$ such that $x_{ij}$ starts with $\emptyset$ for all $\{i,j\}\in P$, then $h_i(U) \subseteq S_i^+$ and $h_i^{-1}(U) \subseteq S_i^-$, and therefore all of the sets $\{g(U) \mid g\in A_\Gamma\}$ are disjoint.  Thus the action
of $A_\Gamma$ on $X$ is demonstrative in the sense of~\cite{BlSa}, and it follows that the conjugate action of $A_\Gamma$ on $(\{0,1\}^\omega)^k$ as a subgroup of~$kV$ is demonstrative as well.

\bigskip
\bibliographystyle{plain}

\begin{thebibliography}{10}

\bibitem{Agol}
I.~Agol, The virtual Haken conjecture. \textit{Documenta Mathematica} \textbf{18} (2013): 1045--1087.

\smallskip

\bibitem{BeBl}
J.~Belk and C.~Bleak, Some undecidability results for asynchronous transducers and the Brin-Thompson group $2V$, preprint, \href{http://arxiv.org/abs/1405.0982}{arXiv:math/1405.0982}.

\smallskip

\bibitem{BlLa}
C.~Bleak and D.~Lanoue. A family of non-isomorphism results. \textit{Geometriae Dedicata} \textbf{146(1)} (2010): 21--26.

\smallskip

\bibitem{BlMaNe}
C.~Bleak, F.~Matucci and M.Neunh\"offer,
Embeddings into Thompson's group $V$ and coCF groups, preprint, \href{http://arxiv.org/abs/1312.1855}{arXiv:math/1312.1855}.

\bibitem{BlSa}
C.~Bleak and O.~Salazar-D\'{\i}az. Free products in R.~Thompson’s group~$V$. \textit{Transactions of the American Mathematical Society} \textbf{365.11} (2013): 5967--5997.

\smallskip

\bibitem{Bri}
M.~Bridson, On the subgroups of right-angled Artin groups and mapping class groups. \textit{Mathematical Research Letters} \textbf{20.2} (2013).

\smallskip

\bibitem{Brin}
M.~Brin, Higher dimensional Thompson groups. \textit{Geom.\ Dedicata} \textbf{108} (2004), 163--192, \href{http://www.arxiv.org/abs/math/0406046}{arXiv:math/0406046}.

\smallskip

\bibitem{Brin2}
M.~Brin, Presentations of higher dimensional Thompson groups. \textit{J.~Algebra} \textbf{284} (2005), 520--558, \href{http://www.arxiv.org/abs/math/0501082}{arXiv:math/0501082}.

\smallskip

\bibitem{Brin3}
M.~Brin, On the baker's map and the simplicity of the higher dimensional Thompson groups~$nV$. \textit{Publicacions Matem\`{a}tiques} \textbf{54.2} (2010): 433--439.

\smallskip

\bibitem{CFP}
J.~Cannon, W.~Floyd, and W.~Parry. Introductory notes on Richard Thompson's groups. \textit{Enseignement Math\'{e}matique} \textbf{42} (1996): 215--256.

\smallskip

\bibitem{Char}
R.~Charney, An introduction to right-angled Artin groups. \textit{Geometriae Dedicata} \textbf{125.1} (2007): 141--158.

\smallskip

\bibitem{CoHa}
N.~Corwin and K. Haymaker, The graph structure of graph groups that are subgroups of Thompson's group $V$. \textit{Preprint} (2016): 1--4.

\smallskip

\bibitem{CrFa}
J.~Crisp and B.~Farb, The prevalence of surface groups in mapping class groups. Preprint.

\smallskip

\bibitem{CrSaSa}
J.~Crisp, M.~Sageev and M.~Sapir, Surface subgroups of right-angled Artin groups.
\textit{International Journal of Algebra and Computation}, \textbf{18.3} (2008) 443--491

\smallskip

\bibitem{CrWi}
J.~Crisp and B.~Wiest, Embeddings of graph braid and surface groups in right-angled Artin groups and braid groups. \textit{Algebraic \& Geometric Topology} \textbf{4.1} (2004): 439--472.

\smallskip

\bibitem{FSWZ}
M.~Fluch, M.~Marschler, S.~Witzel, and M.~Zaremsky, The Brin-Thompson groups $sV$ are of type~$F_\omega$. \textit{Pacific Journal of Mathematics} \textbf{266.2} (2013): 283--295.

\smallskip

\bibitem{HaWi1}
F.~Haglund and D.~Wise, Special cube complexes. \textit{Geometric and Functional Analysis} \textbf{17.5} (2008): 1551--1620.


\smallskip

\bibitem{HaWi2}
F.~Haglund and D.~Wise, Coxeter groups are virtually special. \textit{Advances in Mathematics} \textbf{224.5} (2010): 1890--1903.

\smallskip

\bibitem{HeMa}
J.~Hennig and F.~Matucci, Presentations for the higher-dimensional Thompson groups~$nV$. \textit{Pacific Journal of Mathematics} \textbf{257.1} (2012): 53--74.

\smallskip

\bibitem{HsWi}
T.~Hsu and D.~Wise, On linear and residual properties of graph products. \textit{Michigan Math.~J} \textbf{46.2} (1999): 251--259.

\smallskip

\bibitem{KrKa}
M.~Krasner and L.~Kaloujnine, Produit complet des groupes de permutations et probl\`{e}me d’extension de groupes. III. \textit{Acta Sci. Math.(Szeged)} \textbf{14} (1951): 69--82.

\smallskip

\bibitem{Kob}
T.~Koberda, Ping-pong lemmas with applications to geometry and topology. \textit{IMS Lecture Notes}, Singapore (2012).

\smallskip

\bibitem{PyWi}
P.~Przytycki and D.~Wise, Mixed 3-manifolds are virtually special. Preprint (2012). \href{http://arxiv.org/abs/1205.6742}{arXiv:math/1205.6742}.

\smallskip

\bibitem{SDS}
H.~Servatius, C.~Droms, and B.~Servatius, Surface subgroups of graph groups. \textit{Proceedings of the American Mathematical Society} \textbf{106.3} (1989): 573--578.

\smallskip

\bibitem{Wise1}
D.~Wise, The structure of groups with a quasiconvex hierarchy. Preprint (2011).

\end{thebibliography}

\end{document}